\newtheoremstyle{break}
  {\topsep}{\topsep}%
  {\itshape}{}%
  {\bfseries}{}%
  {\newline}{}%
\theoremstyle{break}
\newtheorem{Theorem}{Theorem}
\newtheorem{Lemma}[Theorem]{Lemma}
\newtheorem{corollary}[Theorem]{Corollary}
\theoremstyle{definition}
\newcommand{\RR}{\mathbb{R}}
\newcommand{\OO}{\mathcal{O}}
\newcommand{\defeq}{\stackrel{\textrm{def}}{=}}
\newcommand{\abs}[1]{\left\vert #1 \right\vert}
\newcommand{\norm}[1]{\left\Vert #1 \right\Vert}
\DeclareMathOperator*{\argmin}{\arg\!\min}
\DeclareMathOperator*{\argmax}{\arg\!\max}
\begin{document}
\title{Approximating the Span of Principal Components via Iterative Least-Squares}
\author{Yariv Aizenbud\footnote{authors contributed equally} ~and Barak Sober$^*$ 
}

\maketitle

\begin{abstract}
In the course of the last century, Principal Component Analysis (PCA) have become one of the pillars of modern scientific methods. 
Although PCA is normally addressed as a statistical tool aiming at finding orthogonal directions on which the variance is maximized, its first introduction by Pearson at 1901 was done through defining a non-linear least-squares minimization problem of fitting a plane to scattered data points. Thus, it seems natural that PCA and linear least-squares regression are somewhat related, as they both aim at fitting planes to data points. 
In this paper, we present a connection between the two approaches. Specifically,  we present an iterated linear least-squares approach, yielding a sequence of subspaces, which converges to the space spanned by the leading principal components (i.e., principal space).
\end{abstract}

\noindent\textbf{keywords:} Least-Squares, Eigensystem, Principal Component Analysis, Singular Value Decomposition, Iterative Least-Squares, Subspace Iterations

\section{Introduction}
Principal Component Analysis (PCA) is perhaps one of the most widely used algorithms in the last half a century. In almost every statistical analysis of high dimensional data, a preliminary step is to perform PCA and inspect its leading Principal Components (PCs). There is a vast literature regarding the theory and practices of PCA as well as the various ways to compute it (to name just a few works on this topic  \cite{chatelin2012eigenvalues,  daultrey1976principal,dunteman1989principal, gourlay1973computational,jolliffe2011principal, stewart2001matrix} and its applications \cite{aizenbud2015OutOfSample,friedman2001elements,novembre2008genes,ringner2008principal}). 
In this paper we present a new way to find the space spanned by the leading PCs of a given matrix through an iterative least-squares procedure. 
Even-though, computationally, it is not more efficient than the alternatives, this approach leads to a new and useful geometrical interpretation of the omnipresent PCA. 

Much of the fame PCA gained over the years results from its wide utilization in general scientific investigations. 
Explicitly, the fact that the leading PCs are the directions at which the variance of the data set is maximized, helps scientists explain measured phenomena. 
However, the discovery of PCA is usually attributed to Pearson in a paper published at 1901 \cite{Pearson1901PCA} that aimed at finding the best fitting plane to a given data set in the least-squares error sense (see Figure \ref{fig:PCvsLS}a). The fact that these seemingly different properties are  facets of the same actuality is discussed in more details in Section \ref{sec:SVDPCA}.
The resemblance between our iterated least-squares procedure and PCA arises from Pearson's presentation of the problem.  

The main contribution of this article is the derivation of an iterative least-squares algorithm to approximate the leading \emph{principle space} (the space spanned by the leading PCs of a data set), as well as proving that it has linear convergence rate. In mathematical terms, let $R = \lbrace r_i \rbrace_{i=1}^N$ be a set of data points sampled from $\RR^p$, we wish to approximate the linear space spanned by the $d$ leading PCs of $R$, through an iterative procedure. 
Given some initial guess of a $d$-dimensional basis $U_0$, we construct a sequence of bases $U_k$ whose span approximate the $d$-dimensional leading principle space.
We define $U_k$ by the following steps:
\begin{enumerate}
    \item Use $U_{k-1}$ as the axes of the ``$x$-domain"  (the independent variables) and its orthogonal complement as the ``$y$-domain"(the dependent variables).
    \item Perform linear regression (without a constant term).
    \item Pick some orthonormal basis $U_k$ for the linear space coinciding with the resulting linear approximation. \item Start over from Step 1 until convergence.
\end{enumerate}
See Section \ref{sec:IterativeLS} for a more formal presentation of the iterative procedure and the resulting Algorithm \ref{alg:LS2PC}.

Given $U_{k-1}\simeq \RR^d$ some $d$-dimensional subspace of $\RR^p$, the data set $R$ can be viewed as a function $f:U_{k-1}\rightarrow U_{k-1}^\perp$, where $U_{k-1}^\perp\simeq\RR^{p-d}$ is the orthogonal complement of $U_{k-1}$. 
Using statistical terminology, we treat the $p-d$ target coordinates as dependent on the $d$ domain coordinates.
Then, performing linear regression, without the constant term, will yield the directions minimizing the sum of squared distances from the regression plane only with respect to the target domain $U_{k-1}^\perp$ (see Figure \ref{fig:PCvsLS}a). 
On the other hand, computing the leading PCs of the data $R$ does not assume any functional relation between the coordinates; i.e., there are no independent and dependent variables, there is only a $p$ dimensional distribution.
Accordingly, the span of the leading PCs yields the plane minimizing the sum of squared Euclidean distances (see Figure \ref{fig:PCvsLS}b). 
As can be seen in Figure \ref{fig:PCvsLS}c, these two minimization problems yield different results.

The iterated least-squares approach we propose (which is described formally in Algorithm \ref{alg:LS2PC} below) can be summarized as follows.
First, given some initial coordinate system, perform a least-squares linear regression (see Figure \ref{fig:LS_Iterates}a), without the intercept (constant term). 
Second, take the resulting linear approximation as a new $x$-domain and its orthogonal complement as the new $y$-domain (Figure \ref{fig:LS_Iterates}b).
In other words, rotate the axes to the point where the linear approximation coincides with the $x$-axis, and repeat the regression-rotation sequence until convergence (Figure \ref{fig:LS_Iterates}c-d).

As stated above, taking these iterations to the limit yields the space spanned by the $d$ leading PCs. 
That is, the non-linear least-squares problem of finding the plane minimizing the Euclidean distances from the scattered data, can be solved through a sequence of linear least-squares minimizations.

\begin{figure}[ht]
\begin{centering}
\includegraphics[width={1.0\linewidth}]{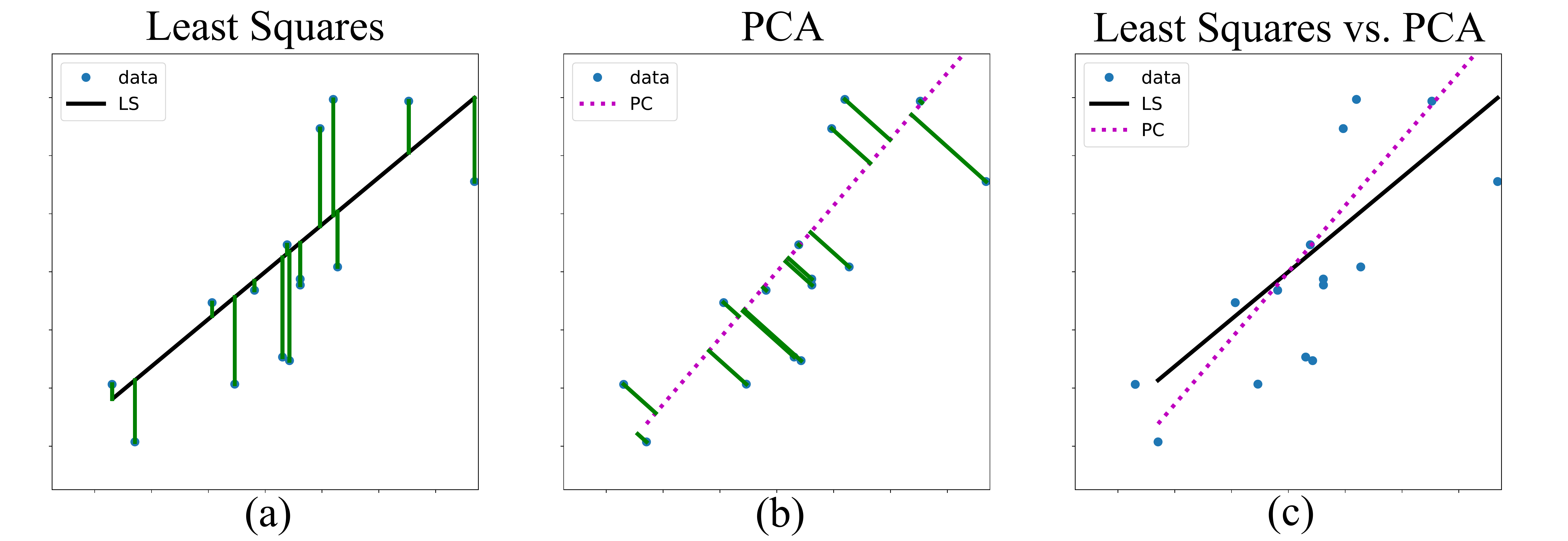}
\par\end{centering}

\caption{Least-Squares approximation compared with Principal Component Analysis (PCA) - a 2D example: \emph{(a)} a least-squares (LS) approximation of a given sample set (the errors which are minimized are marked in green); (b) the first Principle Component (PC) for the same data points (the errors which are minimized are marked in green); (c) LS and PC approximation overlaid.
}
\label{fig:PCvsLS}
\end{figure}

\begin{figure}[ht]
\begin{centering}
\includegraphics[width={1.0\linewidth}]{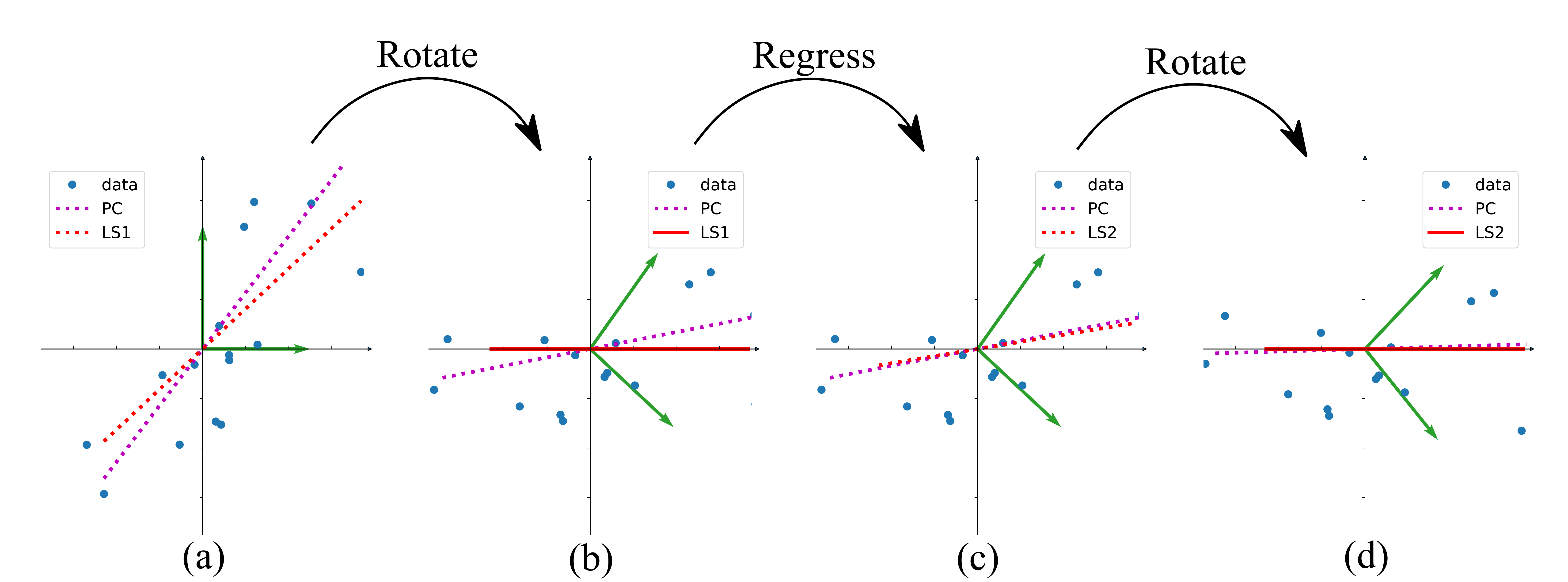}
\par\end{centering}
\caption{Two iterations of Algorithm \ref{alg:LS2PC} applied to data sampled from $\RR^2$. In the leftmost image we portray the sampled data (the data points are marked in blue), and their regression line (in dashed red) with respect to some initial coordinate system (x and y axes are marked in green). The following image (second from left) is a rotation of the same data to a point that the regression line coincide with the horizontal axis. The green arrows mark the original coordinate system to emphasize the rotation of the data. The third image from left is the same as the former just with an added regression line with respect to the current coordinate system (not the original xy axes). The rightmost sub-figure is a rotated version of its preceding sub-figure; as before, the regression line is aligned with the horizontal axis.
\label{fig:LS_Iterates}
}
\end{figure}

In what follows, we give a brief introduction to PCA and explain its relationship to a least-squares problem. 
Then, we describe shortly how one can compute the Principal Spaces through Subspace Iterations.
Subsequently, we present our own method of computing the Principal Spaces, which essentially can be understood as a new Least-Squares equivalent formalization of the Subspace Iterations. 
For a more comprehensive treatment of the topic of principle space or eigenspace computation, we refer the reader to some classical literature on this subject \cite{chatelin2012eigenvalues, gourlay1973computational, stewart2001matrix, trefethen1997numerical}.
\section{Preliminaries}
\subsection{PCA and its relation to the eigenproblem}
\label{sec:SVDPCA}

In the following passages, we use a very elementary language to bridge between the statistical concept of variance maximization in PCA, and the non-linear least-squares problem described by Pearson (i.e., the minimization of the sum of Euclidean squared distances). 
This is, a very well known fact (e.g., see \cite{mirsky1960symmetric}), which reformulates the PCA as an eigenproblem of symmetric positive semi-definite matrices, and we do not claim to be original in connecting these ideas. 
Nonetheless, the approach we take below, which we have not encountered in the literature, is self-contained and does not require any knowledge beyond basic algebraic concepts such as the orthogonal projection.

Starting with the statistical perspective of PCA, our sample set, $\lbrace r_i \rbrace_{i=1}^N$, consists of $N$ samples of a $p$-dimensional random vector $\vec{x}$ with zero mean. Then, the leading PC is defined as the direction explaining most of the variance in the data.
Explicitly, by projecting the data onto this direction we would get the one dimensional random variable with maximal sample variance; i.e.,
\begin{equation}\label{eq:PC1} 
\textrm{PC}_1 \defeq \argmax_{\substack{
		\norm{w}=1 \\
		w\in\RR^p}}
	\sum_{i=1}^{N} \abs{w^T \cdot r_i}^2 = 
	\argmax_{\substack{
			\norm{w}=1 \\
			w\in\RR^p}}
		\norm{w^T \cdot \mathcal{R}}^2
,\end{equation}
where $ \mathcal{R} $ is a matrix  whose columns are $ r_i $ and $ \norm{\cdot} $ is the standard Euclidean norm. 
Then, for $ k > 1 $ we define by recursion 
\begin{equation}\label{eq:PCk}
\textrm{PC}_{k} \defeq \argmax_{\substack{
						\norm{w}=1 \\
						w\perp \mathcal{PS}_{k-1}}}
 \norm{w^T \cdot {\mathcal{R}}}^2
,\end{equation}
where
\begin{equation}
\mathcal{PS}_{k-1} \defeq span\{\textrm{PC}_1, \ldots \textrm{PC}_{k-1}\}.
\end{equation}
Naturally, these definitions result with the formation of an orthonormal basis $ \{\textrm{PC}_1, ... , \textrm{PC}_p\} $.
We denote by $U$ the matrix whose columns are $\{\textrm{PC}_j\}_{j=1}^p$. Considering our data matrix $\mathcal{R}$ in this new basis,
\begin{equation}\label{eq:defYj}
  Y \defeq U^T \cdot \mathcal{R}  
,\end{equation}
reorganizes the variables in a way such that the sample covariance matrix is diagonal.

The sample covariance matrix of $\mathcal{R}$ can be written as 
\[
C = \mathcal{R} \mathcal{R}^T \cdot \frac{1}{N}
.\]
Furthermore, $ \mathcal{R} \mathcal{R}^T $ is symmetric and positive semi-definite and its orthogonal diagonalization is
\begin{equation}\label{eq:RRtDiag}
C = U D U^T
,\end{equation}
where $D = diag(\sigma_1^2, \ldots, \sigma_p^2)$.
For simplicity, we assume\footnote{if we neglect this assumption, the behavior of principle  spaces, which is in the focus of this chapter, is maintained, but the uniqueness of the principal components and their order is lost\label{fnt:1}} that the eigenvalues $ \sigma_1^2 > \sigma_2^2 > \ldots > \sigma_p^2 > 0$ are distinct, and get that $u_j$ and $PC_j$ spans the same subspace.
From \eqref{eq:defYj} we have
\begin{equation}\label{eq:PCvariance}
\sigma_j^2 = \text{var}(Y^j) = \frac{1}{N}\sum_{i=1}^N\abs{\langle \textrm{PC}_j , r_i\rangle}^2,
\end{equation}
where $Y^j$ is the j-th row of $Y$, and $\text{var}(\cdot)$ denotes the sample variance. 

Let us now turn to the Singular Value Decomposition (SVD) \cite{stewart2001matrix}  of the matrix $ \mathcal{R} $. The matrix $\mathcal{R}$ can be written as
\[
\mathcal{R} = U \Sigma V^T
,\]
where $ U, V $ are orthogonal matrices and $ \Sigma = diag(\tilde{\sigma}_1, ... , \tilde{\sigma}_n) $ is a diagonal matrix with the singular values on the diagonal.
Accordingly, we can pronounce
\begin{equation}\label{eq:UPCs}
C = \mathcal{R} \mathcal{R}^T \cdot \frac{1}{N} = U \Sigma \Sigma^T U^T \cdot \frac{1}{N}
,
\end{equation}
and the singular values $\tilde{\sigma}_j$ maintain
\begin{equation}
\tilde{\sigma}_j^2 = N \cdot\text{var}(Y^j) = N\cdot \sigma_j^2
,\end{equation}
which by equation \eqref{eq:PCvariance} equals
\begin{equation}\label{eq:SigmaVar}
\tilde{\sigma}_j^2 = \sum_{i=1}^N\abs{\langle \textrm{PC}_j , r_i\rangle}^2
\end{equation} 

To summarize, up until this point, we have shown the connection between SVD, eigen-decompostion and the variance maximization aspect of PCA.
Turning to the non-linear least-squares problem, introduced by Pearson, we denote by $dist(r_i, w)^2$ the squared Euclidean distance between each point $r_i$ and the line spanned by the vector $w$, and achieve (assuming $ \norm{w}=1 $)
\[\arraycolsep=1.4pt\def\arraystretch{1.8}
\begin{array}{ll}
     \sum_{i=1}^N dist(r_i, w)^2 &= \sum\limits_{i=1}^N\norm{r_i - \langle w , r_i\rangle w}^2  \\
     &= \sum\limits_{i=1}^N \norm{r_i}^2 -  \abs{\langle w, r_i\rangle}^2 \\
     &= \sum\limits_{i=1}^N\norm{r_i}^2 -  \sum\limits_{i=1}^N\abs{\langle w, r_i\rangle}^2
\end{array}
,\]
or, in matrix notation,
\begin{equation}\label{eq:Sumdiw}
\sum_{i=1}^N dist(r_i, w)^2 = \norm{\mathcal{R}}_{\text{F}}^2 -  \sum_{i=1}^N\abs{\langle w, r_i\rangle}^2
\end{equation}
where $ \norm{\mathcal{R}}_{\text{F}} $ denotes the Frobenius norm of the matrix $ \mathcal{R} $.
Using \eqref{eq:SigmaVar} we get
\begin{equation}
\sum_{i=1}^N dist(r_i, \textrm{PC}_j)^2 = 
\norm{\mathcal{R}}_{\text{F}}^2 - \tilde{\sigma}_j^2.
\end{equation}
Thus, from \eqref{eq:PC1} and \eqref{eq:PCk} we have
\begin{equation}
\arraycolsep=1.4pt\def\arraystretch{2.0}
\begin{array}{ll}
\sum\limits_{i=1}^N dist(r_i, \textrm{PC}_1)^2 &= \norm{\mathcal{R}}_{\text{F}}^2 -
\max\limits_{\substack{
		\norm{w}=1 \\
		w\in\RR^p}}
\sum\limits_{i=1}^{N} \abs{\langle w , r_i \rangle}^2 \\
     &=\min\limits_{\substack{
		\norm{w}=1 \\
		w\in\RR^p}} 
\sum\limits_{i=1}^{N}\norm{ r_i - \langle w , r_i \rangle w}^2\\
 &= \min\limits_{\substack{
		\norm{w}=1 \\
		w\in\RR^p}} 
\sum\limits_{i=1}^N dist(r_i, w)^2,
\end{array}
\end{equation}
and similarly for $ k > 1 $
\begin{equation}
\sum_{i=1}^N dist(r_i, \textrm{PC}_k)^2 = 
\min_{\substack{
		\norm{w}=1 \\
		w\perp \mathcal{PS}_{k-1}}} 
\sum_{i=1}^N dist(r_i, w)^2
.
\end{equation}
As a result, we can see that the PCs can be defined equivalently as
\begin{equation}\label{eq:PC1min}
\textrm{PC}_1 \defeq
\argmin_{\substack{
		\norm{w}=1 \\
		w\in\RR^p}} 
\sum_{i=1}^N dist(r_i, w)^2 
\end{equation}
\begin{equation}\label{eq:PCkmin}
\textrm{PC}_k \defeq
\argmin_{\substack{
		\norm{w}=1 \\
		w\perp \mathcal{PS}_{k-1}}} 
\sum_{i=1}^N dist(r_i, w)^2 
\end{equation}

In other words, the variance maximization property of the PCs (expressed in the definitions of Equations \eqref{eq:PC1}-\eqref{eq:PCk}) is equivalent to the fact that they minimize the sum of squared distances to the sample set. 
As mentioned above, this result can be derived directly from the Eckart-Young-Mirsky theorem \cite{eckart1936approximation, mirsky1960symmetric}. 
Nevertheless, the aforementioned derivation gives a clearer motivation to the utilization of least-squares in the process of calculating the Principal Space (i.e., the space spanned by the PCs). 

In addition, we can see from Equation \eqref{eq:RRtDiag} that the  PCs are just eigenvectors of $ \mathcal{R} \mathcal{R}^T $. Thus, finding the span of PCs is equivalent to finding eigenspaces of the symmetric matrix $ \mathcal{R} \mathcal{R}^T $. 

\subsection{Subspace Iterations}
As shown above, finding the Principal Space of a matrix is equivalent to finding the eigenspace of a symmetric positive semi-definite matrix. 
In general, the problem of finding the eigenvalues of a given matrix is equivalent to finding roots of an $n^{th}$ degree polynomial \cite{stewart2001matrix}.
Thus, extracting the eigenvalues and eigenvectors of a given matrix of dimension $n$ cannot have a closed solution, and is bound to be iterative.

Most of the known approaches to solve the eigenproblem are some generalizations of the Power Iterations \cite{stewart2001matrix,trefethen1997numerical}.
The basic idea behind this algorithm is that if we take some initial vector $u$,  repeatedly apply $A$ on it, and normalize the result, then we get in the limit the most dominant eigenvector (under the simplifying assumption that $\lambda_1 > \lambda_2$; see Algorithm \ref{alg:Power} for more details). 
For a more rigor and comprehensive treatment see \cite{stewart2001matrix} for example.

\begin{algorithm}
\caption{Power Iterations}
\label{alg:Power}
\begin{algorithmic}[1]
\State {\bfseries Input:} $A\in\RR^{p\times p}, u\in\RR^p$, where $u$ is a first guess for the leading eigenvector (can be picked at random).
\State{\bfseries Output:} $v\in\RR^p$ - the leading eigenvector of $A$.
\State $v = u$
\Repeat
    \State $v_{prev} = v$
    \State $z = A v$
	\State $v = \frac{z}{\norm{z}}$
\Until {$\|v-v_{\text{prev}}\|<\epsilon$}
\end{algorithmic}
\end{algorithm}

In this paper, we are concerned with computing the span of the leading $d$ eigenvectors (assuming that there is a gap between the $d$ and $d+1$ eigenvalues). 
One of the known approaches to tackle this problem is the Subspace Iterations, which is a generalization of the Power Iterations.
Similar to Power Iterations, these iterations comprise the multiplication of the matrix $A$ with the former approximating basis of the subspace and performing Gram-Schmidt procedure (G-S) to yield the next approximating basis (for more details see Algorithm \ref{alg:Subspace}).
Although this is a naive version of the Subspace Iteration method, under the assumptions that $A$ is symmetric and that 
$\lambda_1 \geq \lambda_2 \geq \ldots \geq \lambda_d > \lambda_{d+1} \geq \ldots \geq \lambda_n$  the algorithm is promised to converge exponentially fast (i.e., geometric rate of convergence) \cite{stewart2001matrix}. 
Below, we show that the Iterative Least-Squares algorithm presented here (Algorithm \ref{alg:LS2PC}) coincides with this version of the Subspace Iterations. Thus, Algorithm \ref{alg:LS2PC} also converges to the Principle Space, with the same convergence rates. 

\begin{algorithm}
\caption{Subspace Iterations}
\label{alg:Subspace}
\begin{algorithmic}[1]
\State {\bfseries Input:} $A\in\RR^{p\times p}, U\in\RR^{p\times d}$, where $U$ is a first guess basis for the leading $d$ dimensional eigenspace.
\State{\bfseries Output:} $V\in\RR^{p\times d}$ - a basis for the $d$-dimensional eigenspace of $A$.
\State V = U
\Repeat
    \State $V_{prev} = V$
	\State $Z = A V$
	\State $V = \textrm{G-S}(Z)$
\Until {$\|V-V_{\text{prev}}\|<\epsilon$}
\end{algorithmic}
\end{algorithm}

\section{Iterative least-squares}
\label{sec:IterativeLS}
Let $\mathcal{R}$ be a matrix of dimension $p\times N$ and we wish to compute its $d$-dimensional Principal Space; i.e, the span of its $d$ leading PCs.
Let the principal values of $\mathcal{R}$ be $\sigma_1\geq\sigma_2\geq\cdots\geq\sigma_d>\sigma_{d+1}\geq\cdots\geq\sigma_p$, and let $\{r_i\}_{i=1}^N$ denote the columns of $\mathcal{R}$,
\[
\mathcal{R} = 
\left(
\begin{array}{ccc}
| &  & | \\
r_1 & \cdots & r_N \\
| &  & | 
\end{array}
\right)_{p\times N}
.\]
Then, given an initial $d$-dimensional orthonormal coordinate system $ \{u_j^0\}_{j=1}^d $, written in matrix form as
\[
U_0 = 
\left(
\begin{array}{ccc}
| &  & | \\
u^0_1 & \cdots & u^0_d \\
| &  & | 
\end{array}
\right)_{p\times d}
,\]
we iterate the following two-steps procedure.

\begin{enumerate}
	\item Define $X_k$ to be the projections of $r_i$ onto $Col(U_k)$, the column space of $U_k$:
	    \[
        X_k = 
        \left(
        \begin{array}{ccc}
        | &  & | \\
        x^k_1 & \cdots & x^k_N \\
        | &  & | 
        \end{array}
        \right)_{d\times N}=
        U_k^T \cdot \mathcal{R},
        \] 
        and solve the linear least-squares problem
        \begin{equation}\label{eq:LSiter}
        A_{k+1} = \argmin_{A\in \RR^{p\times d}} \sum_{i=1}^N \norm{r_i - A x_i^k}^2 = \argmin_{A\in \RR^{p\times d}}\norm{\mathcal{R} - A X_k}_F^2,
        \end{equation}
    	
	\item Apply Gram-Schmidt on the columns of $A_{k+1}$ to get a new orthogonal coordinate system. Namely,
	\begin{equation}\label{eq:LSiter_Step2}
	U_{k+1} \defeq \textrm{G-S}(A_{k+1})
	,\end{equation}
	where $\textrm{G-S}(A)$ stands for the Gram Schmidt process applied on the columns of a matrix $A$, yielding an orthonormal basis to the column space of $A$.
\end{enumerate}

The only demand in Power iterations, needed to show its convergence (given that the first two eigenvalues are distinct), is that the initial vector is not perpendicular to the direction of the most dominant eigenvector.
Similarly, to prove the convergence of Algorithm \ref{alg:LS2PC} we generalize this requirement to
\begin{equation}\label{eq:TheCondition}
  rank(U_0^T \cdot\mathcal{U}_d) = d,  
\end{equation}
where $U_0$ denotes the initial guess for a basis and $\mathcal{U}_d$ is the matrix
\[
\mathcal{U}_{d} =
\left(
\begin{array}{ccc}
     | & & |  \\
     PC_1 & \cdots & PC_d \\
     | & & |
\end{array}
\right)
.\]
\begin{algorithm}
\caption{Iterative Least-Squares - LS2PC}
\label{alg:LS2PC}
\begin{algorithmic}[1]
\State {\bfseries Input:}\begin{tabular}[t]{ll} 
$\lbrace r_i \rbrace_{i=1}^N\subset\RR^p$ & Data set.   \\ 
$\epsilon\in\RR$ & Precision threshold.\\
$d$ & Subspace dimension.\\
$U_0$ & $p\times d$ orthogonal matrix - initial guess.
\end{tabular}
\State{\bfseries Output:} $U$ - a $p\times d$ matrix with columns that span the $d$ largest PCs
\State define $ \mathcal{R} $ to be a $ p\times N $ matrix whose columns are $ \{r_i\}_{i=1}^N $
\Repeat
	\State $ U_{\text{prev}} = U $
	\State $X = U^T \mathcal{R}$ \Comment{Change the coordinate system }
	\State Solve $A \cdot X X^T = \mathcal{R} \cdot X^T$ for $A$ \Comment{Solve least squares problem}
	\State $U = \textrm{G-S}(A)$ \Comment{Find orthogonal basis}
\Until {$\|U-U_{\text{prev}}\|<\epsilon$}
\end{algorithmic}
\end{algorithm}

We now wish to prove the following theorem:
\begin{Theorem}[Iterative least-squares is equivalent to subspace iterations]\label{thm:ILS_eq_SI}
Let $\mathcal{R}$ be a matrix of dimensions $p\times N$ with $N>p$ and singular values $\sigma_1 \geq \sigma_2 \geq \cdots \sigma_d > \sigma_{d+1} \geq \cdots \geq\sigma_p$. 
Let $U_0$ be a matrix of dimension $p\times d$, satisfying condition \eqref{eq:TheCondition} (i.e., $rank(U_0^T\cdot \mathcal{U}_d) = d$).
Initializing with $U_0$ Algorithm \ref{alg:LS2PC} for $\mathcal{R}$ and Algorithm \ref{alg:Subspace} for $\mathcal{R}\mathcal{R}^T$ will result in $U^{LS}_k$ and $U^{SI}_k$ respectively ($k\geq 0$), where
\begin{equation}
    Col(U_k^{LS}) = Col(U_k^{SI})
\end{equation}
\end{Theorem}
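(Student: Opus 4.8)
The plan is to reduce both algorithms to the same column-space recursion and then induct on $k$. The starting point is to solve the least-squares step \eqref{eq:LSiter} in closed form: the normal equations $A_{k+1} X_k X_k^T = \mathcal{R} X_k^T$ (exactly line 7 of Algorithm \ref{alg:LS2PC}) give $A_{k+1} = \mathcal{R} X_k^T (X_k X_k^T)^{-1}$ whenever $X_k X_k^T$ is invertible. Substituting $X_k = U_k^T \mathcal{R}$ and writing $A \defeq \mathcal{R}\mathcal{R}^T$, this becomes
\[
A_{k+1} = A U_k^{LS} \left( (U_k^{LS})^T A U_k^{LS} \right)^{-1}.
\]
Since the $d\times d$ factor $((U_k^{LS})^T A U_k^{LS})^{-1}$ is invertible, right-multiplication by it does not change the column space, so $Col(A_{k+1}) = Col(A U_k^{LS})$. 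Because the Gram--Schmidt step \eqref{eq:LSiter_Step2} only replaces a matrix by an orthonormal basis of its column space, we obtain the clean recursion $Col(U_{k+1}^{LS}) = Col(A U_k^{LS})$. On the subspace-iteration side the same identity holds immediately: $Z_{k+1} = A U_k^{SI}$ and G-S preserves column space, so $Col(U_{k+1}^{SI}) = Col(A U_k^{SI})$.

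With both recursions in hand, I would prove $Col(U_k^{LS}) = Col(U_k^{SI})$ by induction on $k$. The base case is immediate since both algorithms are initialized with the same $U_0$. For the inductive step, assume $Col(U_k^{LS}) = Col(U_k^{SI})$; then there is an invertible $d\times d$ matrix $M$ with $U_k^{SI} = U_k^{LS} M$, hence $A U_k^{SI} = (A U_k^{LS}) M$ and the two products share a column space. Combining this with the two recursions of the previous paragraph yields $Col(U_{k+1}^{LS}) = Col(A U_k^{LS}) = Col(A U_k^{SI}) = Col(U_{k+1}^{SI})$, closing the induction.

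The step that requires real care --- and the only place where the hypotheses are used --- is justifying that every object above is well defined, i.e. that $(U_k^{LS})^T A U_k^{LS}$ is invertible (so the least-squares solution is unique) and that $A U_k^{LS}$ has full column rank $d$ (so G-S returns $d$ vectors and the matrix $M$ above is genuinely invertible). For $k \geq 1$ this is automatic: $Col(U_k^{LS}) = Col(A U_{k-1}^{LS}) \subseteq \text{Range}(A)$, and since $A$ is symmetric positive semi-definite it is positive definite on its range; thus $U_k^T A U_k$ is positive definite, and $A U_k c = 0$ forces $U_k c \in \text{Null}(A)\cap\text{Range}(A) = \{0\}$, giving $A U_k$ rank $d$. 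The delicate case is $k = 0$, where I would expand $U_0^T A U_0 = W^T D W$ in the principal-component basis ($W = U^T U_0$, $D = \text{diag}(\sigma_1^2,\dots,\sigma_p^2)$) and use condition \eqref{eq:TheCondition}: $rank(U_0^T \mathcal{U}_d) = d$ means the top $d$ rows of $W$ span $\RR^d$, and together with $\sigma_1^2,\dots,\sigma_d^2 > 0$ (which follows from $\sigma_d > \sigma_{d+1} \geq 0$) this makes $W^T D W$ positive definite. Hence condition \eqref{eq:TheCondition} is exactly what guarantees the first least-squares problem is non-degenerate, after which the range argument takes over for all later iterates. I expect this bookkeeping of non-degeneracy to be the main obstacle, whereas the column-space algebra itself is short.
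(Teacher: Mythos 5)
Your proof is correct, and its skeleton is the paper's: the closed-form solution $A_{k+1}=\mathcal{R}\mathcal{R}^T U_k\bigl(U_k^T\mathcal{R}\mathcal{R}^T U_k\bigr)^{-1}$ of the normal equations (the paper's Lemma \ref{lem:AkSolution}), the observation that right-multiplication by an invertible $d\times d$ factor and Gram--Schmidt both preserve column space, and induction on the common recursion $Col(U_{k+1})=Col(\mathcal{R}\mathcal{R}^TU_k)$. Where you genuinely diverge is in how the non-degeneracy is propagated. The paper maintains the invariant $rank(U_k^T\mathcal{U}_d)=d$ at every step: it writes $U_0=\mathcal{U}_dC_1+\overline{\mathcal{U}}_dC_2$, uses the invariance of $span(\mathcal{U}_d)$ and of its orthogonal complement under $\mathcal{R}\mathcal{R}^T$ to get $\mathcal{R}\mathcal{R}^TU_0=\mathcal{U}_dL_1C_1+\overline{\mathcal{U}}_dL_2C_2$ with $L_1C_1$ of rank $d$, and separately (Lemma \ref{lem:XXtInvertible}) derives the invertibility of $X_kX_k^T$ from that rank condition via a matrix $V$ with $\mathcal{R}V=\mathcal{U}_d$. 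You instead use condition \eqref{eq:TheCondition} only at $k=0$, through the positive definiteness of $W^TDW$, and then note that for $k\geq 1$ the iterates already lie in $\mathrm{Range}(\mathcal{R}\mathcal{R}^T)$, where the symmetric PSD matrix is positive definite; this is shorter and avoids re-verifying the rank condition at each iteration. What the paper's stronger invariant buys is that each $U_k$ is shown to satisfy the theorem's hypothesis verbatim, which is exactly the form needed to re-apply the base case (and the form in which the subspace-iteration convergence statement is quoted in the corollary). One detail to make explicit in your $k=0$ step: positive definiteness of $U_0^T\mathcal{R}\mathcal{R}^TU_0$ also forces $rank(\mathcal{R}\mathcal{R}^TU_0)=d$ (since $d=rank(U_0^T\mathcal{R}\mathcal{R}^TU_0)\leq rank(\mathcal{R}\mathcal{R}^TU_0)\leq d$), which you need so that Gram--Schmidt returns $d$ vectors at the first step; for $k\geq1$ your $Null\cap\mathrm{Range}=\{0\}$ argument already covers this.
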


\begin{proof}
The outline of the proof is as follows, under the conditions of the theorem we can show 
\begin{enumerate}
    \item \label{outline:LSisSI}$Col(U_1^{LS}) = Col({U}^{SI}_1)$.
    \item \label{outline:U1Invert}${U}^{LS}_1$ satisfies condition \eqref{eq:TheCondition}.
\end{enumerate}
Then, by the same means (by intializing $U_0$ with $U_1^{LS}$) we can show that $Col(U_2^{LS}) = Col({U}^{SI}_2)$ and ${U}^{LS}_2$ satisfies condition \eqref{eq:TheCondition}.
Thus, the theorem is given by induction.

Lemma~\ref{lem:XXtInvertible} below, shows that, under the conditions of the theorem, $X_0X_0^T = U_0^T \mathcal{R}\mathcal{R}^TU_0$ is invertible, thus, by Lemma~\ref{lem:AkSolution} applied on $A_1$ from \eqref{eq:LSiter}, we have that 
\begin{equation*}
A_{1} = \mathcal{R} \mathcal{R}^T U_0 (U_0^T \mathcal{R} \mathcal{R}^T U_0)^{-1}.
\end{equation*}
Furthermore, as $(U_0^T \mathcal{R} \mathcal{R}^T U_0)^{-1}\in\RR^{d\times d}$ is of full rank, we get that 
\begin{equation}\label{eq:col_space_eq}
    Col(U_{1}^{LS}) = Col(A_{1}) = Col(\mathcal{R} \mathcal{R}^T U_0) = Col({U}^{SI}_1).
\end{equation}

We now turn to show claim \ref{outline:U1Invert} of the outline; namely the fact that 
\[
rank(\mathcal{U}_d^T U_1^{LS}) = d
,\]
which by \eqref{eq:col_space_eq} can be pronounced equivalently as
\[
rank(\mathcal{U}_d^T U_1^{SI}) = d
.\]
Note that by definition \eqref{eq:LSiter_Step2}
\[
Col(U_1^{SI}) = Col(\mathcal{R}\mathcal{R}^T U_0)
,\]
and so, to prove the claim suffice it to show that
\[
rank(\mathcal{U}_d^T \mathcal{R}\mathcal{R}^T U_0) = rank((\mathcal{R}\mathcal{R}^T U_0)^T \mathcal{U}_d) = d
.\]
Denote by $\overline{\mathcal{U}}_d$ the $(p-d) \times p$ matrix with columns that complete the columns of $\mathcal{U}_d$ to an orthonormal basis of $\RR^p$. From \eqref{eq:UPCs} and from the fact that $\sigma_1\geq\sigma_2\geq\cdots\geq\sigma_d>\sigma_{d+1}\geq\cdots\geq\sigma_p\geq 0$, it is evident that
\begin{equation}\label{eq:RRT_spectral_decomp}
    \begin{array}{c}
\mathcal{R}\mathcal{R}^T \mathcal{U}_d = \mathcal{U}_d L_1\\ \mathcal{R}\mathcal{R}^T \overline{\mathcal{U}}_d = \overline{\mathcal{U}}_d L_2    
\end{array}
\end{equation}
for $L_1 \in \RR ^{d \times p}$ of full rank, and $L_2 \in \RR ^{(p-d) \times p}$. Additionally, note that since $[\mathcal{U}_d, \overline{\mathcal{U}}_d]$ is an orthonormal basis, we can write $U_0 = \mathcal{U}_d C_1 + \overline{\mathcal{U}}_d C_2 $. 
Thus, from \eqref{eq:RRT_spectral_decomp} we have 
\begin{equation}\label{eq:RRtU0_as_Sum}
    \mathcal{R}\mathcal{R}^TU_0 = \mathcal{U}_d L_1 C_1 + \overline{\mathcal{U}}_d L_2 C_2,
\end{equation}
and
\begin{equation}\label{eq:rankUU_0_eq_rankC_1}
d = rank(U_0^T \mathcal{U}_d) =rank(\mathcal{U}_d^T U_0) = rank(\mathcal{U}_d^T (\mathcal{U}_d C_1 + \overline{\mathcal{U}}_d C_2 ))=rank(C_1).
\end{equation}
Since $rank(L_1) =rank(C_1) = d$, it follows that $rank(L_1 C_1) = d$, and from \eqref{eq:RRtU0_as_Sum} we get
\[
rank((\mathcal{R}\mathcal{R}^T U_0)^T \mathcal{U}_d) = rank([C_1^T L_1^T\mathcal{U}^T_d  + C_2^T L_2^T \overline{\mathcal{U}}^T_d] \mathcal{U}_d) = rank(C_1^T L_1^T) = d
\]
In other words, we showed that $U^{SI}_1$ (or, equivalently $U^{LS}_1$) satisfies the conditions of this theorem (in the role of $U_0$), and the proof is concluded.
\end{proof}
Since Algorithm \ref{alg:LS2PC} is equivalent to subspace iterations, it inherits all the properties and improvements that have been developed for subspace iterations. One result that we mention explicitly in the next corollary which follows directly from the properties of subspace iterations (see for example \cite{stewart2001matrix}).
\begin{corollary}
Under the conditions of Theorem \ref{thm:ILS_eq_SI} 
\[
\lim_{k\rightarrow\infty}Col(U^{LS}_k) = Col(\mathcal{U}_d)
,\]
and the rate of convergence is $\OO\left(\left(\frac{\sigma^{2}_{d+1}}{\sigma^{2}_d} + \varepsilon\right)^k\right)$ for all $\varepsilon > 0$.
\end{corollary}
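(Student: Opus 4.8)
The plan is to reduce everything to the convergence of subspace iterations, which Theorem~\ref{thm:ILS_eq_SI} has already matched to our algorithm. Since $Col(U_k^{LS}) = Col(U_k^{SI})$ for every $k$, it suffices to prove that the subspace iterates $U_k^{SI}$ generated from $A \defeq \mathcal{R}\mathcal{R}^T$ converge to $Col(\mathcal{U}_d)$ at the claimed rate. Here $A$ is symmetric positive semi-definite with eigenvalues $\sigma_1^2 \geq \cdots \geq \sigma_d^2 > \sigma_{d+1}^2 \geq \cdots \geq \sigma_p^2$, so the spectral gap governing the dominant $d$-dimensional eigenspace is exactly $\sigma_{d+1}^2/\sigma_d^2$. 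This is the classical setting treated in \cite{stewart2001matrix}, so in principle one may simply cite it; below I outline the self-contained argument.

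First I would record that the Gram--Schmidt step does not alter the column space, so that $Col(U_k^{SI}) = Col(A^k U_0)$. Reusing the orthonormal splitting $U_0 = \mathcal{U}_d C_1 + \overline{\mathcal{U}}_d C_2$ and the invariance relations \eqref{eq:RRT_spectral_decomp} from the proof of Theorem~\ref{thm:ILS_eq_SI}, an easy induction gives $A^k U_0 = \mathcal{U}_d L_1^k C_1 + \overline{\mathcal{U}}_d L_2^k C_2$, where $L_1 = \mathrm{diag}(\sigma_1^2,\dots,\sigma_d^2)$ and $L_2 = \mathrm{diag}(\sigma_{d+1}^2,\dots,\sigma_p^2)$. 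Condition \eqref{eq:TheCondition} guarantees, via \eqref{eq:rankUU_0_eq_rankC_1}, that $C_1$ is invertible, and $L_1^k$ is invertible as well; right-multiplying by the invertible matrix $C_1^{-1}L_1^{-k}$ leaves the column space unchanged and yields
\[
Col(A^k U_0) = Col\!\left(\mathcal{U}_d + \overline{\mathcal{U}}_d\, E_k\right), \qquad E_k \defeq L_2^k\, C_2 C_1^{-1}\, L_1^{-k}.
\]

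The heart of the argument is then to control $E_k$ and translate it into a statement about subspaces. Writing $M = C_2 C_1^{-1}$, the $(i,j)$ entry of $E_k$ equals $M_{ij}(\sigma_{d+i}^2/\sigma_j^2)^k$, and since every such ratio is dominated by $\sigma_{d+1}^2/\sigma_d^2 < 1$, one obtains $\norm{E_k} \leq c\,(\sigma_{d+1}^2/\sigma_d^2)^k$ for a constant $c$ depending only on $M$ and the dimensions. Because the columns of $\mathcal{U}_d$ are orthonormal and the columns of $\overline{\mathcal{U}}_d E_k$ lie in $Col(\mathcal{U}_d)^\perp$, the tangents of the principal angles between $Col(\mathcal{U}_d + \overline{\mathcal{U}}_d E_k)$ and $Col(\mathcal{U}_d)$ are precisely the singular values of $E_k$; hence the subspace distance is bounded by $\norm{E_k}$. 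This proves both the limit $Col(U_k^{LS}) \to Col(\mathcal{U}_d)$ and the rate. Finally, the stated $+\varepsilon$ simply absorbs the constant $c$: for any $\varepsilon > 0$ we have $c\,(\sigma_{d+1}^2/\sigma_d^2)^k \leq (\sigma_{d+1}^2/\sigma_d^2 + \varepsilon)^k$ for all sufficiently large $k$.

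I expect the main obstacle to be the last translation step, namely rigorously relating the norm decay of the perturbation $E_k$ to convergence in a genuine metric on subspaces (principal angles, or the projection/gap metric) rather than merely arguing that a particular spanning matrix converges. The clean fact that the columns of $\overline{\mathcal{U}}_d E_k$ are orthogonal to $Col(\mathcal{U}_d)$ makes the tangent characterization of principal angles directly available, so the difficulty is really one of invoking the correct perturbation lemma precisely rather than of new computation; everything else is bookkeeping with the spectral decomposition already assembled in the proof of Theorem~\ref{thm:ILS_eq_SI}.
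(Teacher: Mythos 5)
Your proposal is correct and follows the paper's route exactly: the paper proves this corollary simply by combining Theorem \ref{thm:ILS_eq_SI} with the classical convergence theorem for subspace iterations applied to the symmetric positive semi-definite matrix $\mathcal{R}\mathcal{R}^T$, citing \cite{stewart2001matrix} for the latter. The only difference is that you go on to supply the standard proof of that cited result (via $Col(U_k^{SI}) = Col((\mathcal{R}\mathcal{R}^T)^k U_0)$, the splitting $U_0 = \mathcal{U}_d C_1 + \overline{\mathcal{U}}_d C_2$, and the tangent-of-principal-angles bound on $E_k$), which is a correct and welcome elaboration but not something the paper itself carries out.
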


Lemma \ref{lem:AkSolution} is a simple extension of the standard ordinary least-squares (OLS) solution to multiple ``right hand sides" at once. The proof is presented for completeness.
\begin{Lemma}\label{lem:AkSolution}
Let 
$$
B = \argmin_{A\in M_{p\times d}}\norm{\mathcal{R} - A X}_F^2,
$$
and assume that $X X^T$ is invertivle. Than 
$$
B = \mathcal{R} X^T (X^TX)^{-1}.
$$
\end{Lemma}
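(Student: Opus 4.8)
The plan is to treat $g(A) \defeq \norm{\mathcal{R} - AX}_{\text{F}}^2$ as a smooth convex quadratic in the entries of $A$ and to locate its unique stationary point via the associated normal equations. (Note that for the dimensions to match---$\mathcal{R}$ is $p\times N$, $A$ is $p\times d$, and $X$ is $d\times N$---the claimed minimizer must read $B=\mathcal{R}X^T(XX^T)^{-1}$, with the $d\times d$ Gram matrix $XX^T$ that the hypothesis assumes invertible; this is precisely the form used for $A_1$ in the proof of Theorem \ref{thm:ILS_eq_SI}.)

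First I would expand the Frobenius norm through the identity $\norm{M}_{\text{F}}^2 = \operatorname{tr}(MM^T)$, obtaining
$$
g(A) = \operatorname{tr}(\mathcal{R}\mathcal{R}^T) - 2\operatorname{tr}(A X \mathcal{R}^T) + \operatorname{tr}(A X X^T A^T).
$$
Next I would differentiate with respect to the matrix $A$, using the standard identities $\partial_A \operatorname{tr}(A X\mathcal{R}^T) = \mathcal{R} X^T$ and $\partial_A \operatorname{tr}(AXX^TA^T) = 2 A X X^T$ (the latter exploiting that $XX^T$ is symmetric), and set the gradient to zero. This yields the normal equations
$$
A X X^T = \mathcal{R} X^T.
$$
Since $XX^T$ is invertible by assumption, multiplying on the right by $(XX^T)^{-1}$ gives the stated $B = \mathcal{R} X^T (XX^T)^{-1}$.

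To be sure this stationary point is the global minimizer rather than merely a critical point, I would observe that $g$ is convex: along any perturbation $H\in M_{p\times d}$ the second-order term is $\norm{HX}_{\text{F}}^2 \geq 0$, so $g$ is a convex quadratic and its unique stationary point is a global minimum (invertibility of $XX^T$ forces $HX = 0 \Rightarrow H = 0$, giving strict convexity and hence uniqueness). An alternative, fully elementary route avoids matrix calculus: since the Frobenius norm splits across rows, $g(A) = \sum_{j=1}^p \norm{(\rho^{(j)})^T - X^T (a^{(j)})^T}^2$, where $\rho^{(j)}$ and $a^{(j)}$ are the $j$-th rows of $\mathcal{R}$ and $A$; each summand is an independent ordinary least-squares problem with design matrix $X^T$, and stacking the classical solutions $(a^{(j)})^T = (XX^T)^{-1}X(\rho^{(j)})^T$ back into rows reproduces the same $B$. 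There is no genuine obstacle here; the only points needing care are the bookkeeping in the trace/gradient computation and the explicit verification that the critical point is the global minimum, which the convexity argument supplies.
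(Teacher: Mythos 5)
Your proof is correct, and your primary argument takes a genuinely different route from the paper's. The paper proves the lemma by exploiting the fact that the squared Frobenius norm splits across the rows of $\mathcal{R} - AX$, reducing the matrix problem to $p$ independent scalar ordinary least-squares problems $\min_{w}\norm{(r^j)^T - X^T w^T}^2$, each solved by the classical normal equations and then stacked back into $B$ --- this is exactly the ``alternative, fully elementary route'' you sketch in your last paragraph. Your main argument instead differentiates $\operatorname{tr}\bigl((\mathcal{R}-AX)(\mathcal{R}-AX)^T\bigr)$ directly in the matrix variable $A$ and verifies global optimality via convexity of the quadratic. The trade-off is what you already identify: the matrix-calculus route is shorter but obliges you to check that the stationary point is the global minimum (your second-order term $\norm{HX}_{\text{F}}^2\geq 0$ handles this, and invertibility of $XX^T$ gives uniqueness), whereas the row-splitting route inherits optimality for free from the scalar OLS theory, which is presumably why the paper chose it. One further point in your favour: you correctly note that the $(X^T X)^{-1}$ appearing in the lemma statement must be read as $(XX^T)^{-1}$ for the dimensions to work ($X$ is $d\times N$, so $XX^T$ is the invertible $d\times d$ Gram matrix); the paper's own proof in fact derives $B^T = (XX^T)^{-1}X\mathcal{R}^T$ and the final displayed formula carries the same typo as the statement.
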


\begin{proof}
    Note that,
	\begin{equation}\label{eq:B_opt_equivalent_form}
	    B = \argmin_{A\in M_{p\times d}} \sum_{i=1}^N \norm{r_i - A x_i}^2,
	\end{equation}
	where $r_i$ and $x_i$ are the columns of $\mathcal{R}$ and $X$ correspondingly.
	Since \eqref{eq:B_opt_equivalent_form} this is a minimization of a sum of squares, it can be solved row by row. Namely, by denoting $ a^j $ the $ \text{j}^{\text{th}} $ row of a matrix $ A $ we can write
	\[
	 \min_{A\in M_{p\times d}} \sum_{i=1}^N \norm{r_i - A x_i}^2  = 
	 \sum_{j=1}^{p}\min_{(a^j)^T\in \RR^d} \sum_{i=1}^N \abs{r_i^j - a^j\cdot x_i}^2
	,\]
	where $ r_i^j $ denotes the $ \text{j}^{\text{th}} $ coordinate of the vector $ r_i $. 
	In other words, each row $b^j$ of the desired minimizing matrix $ B $ of \ref{eq:B_opt_equivalent_form} can be decided independently by solving 
	\begin{equation}\label{eq:AkMinimization}
	(b^j)^T = \argmin_{w^T\in \RR^d} \sum_{i=1}^N \abs{r_i^j - w\cdot x_i}^2 = 
	\argmin_{w^T\in \RR^d} \norm{r^j - w X}^2  =
	\argmin_{w^T\in \RR^d} \norm{(r^j)^T - X^T w^T}^2 
	,\end{equation}
	where $ r^j $ is the $ \text{j}^{\text{th}} $ row of the matrix $ \mathcal{R} $.
	However, this is just a scalar linear least squares equation and assuming that $X X^T$ is invertible, it can be solved by
	\begin{equation}\label{eq:AkjSolution}
	(b^j)^T = (X \cdot X^T)^{-1}\cdot X \cdot (r^j)^T
	.\end{equation}
	Thus, we have
	\begin{equation}
	B^T = (X \cdot X^T)^{-1}\cdot X \cdot \mathcal{R}^T,
	\end{equation}
	or,
	\begin{equation}
	B = \mathcal{R} X^T (X^TX)^{-1}.
	\end{equation}
\end{proof}

As can be seen in the following lemma, the fact that $X_0 X_0^T$ is invertible is a result of the condition $rank(U_0^T \mathcal{U}_d)=d$.
In other words, as long as our basis $U_k$ is not orthogonal in any principal direction to the principal vectors $\mathcal{U}_d$  the matrix is indeed invertible. 

\begin{Lemma}\label{lem:XXtInvertible}
Under the notations of Algorithm \ref{alg:LS2PC}, if $rank(U_0^T \mathcal{U}_d)=d$ then the matrix $X_0 X_0^T\in \RR^{d\times d}$ is invertible.
\end{Lemma}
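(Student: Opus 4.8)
The plan is to show that the symmetric matrix $X_0 X_0^T$ is \emph{positive definite}; since it is automatically positive semi-definite, this is equivalent to invertibility. Indeed, because $X_0 = U_0^T\mathcal{R}$, for every $v\in\RR^d$ we have
\[
v^T X_0 X_0^T v = v^T U_0^T\mathcal{R}\mathcal{R}^T U_0 v = \norm{\mathcal{R}^T U_0 v}^2 \geq 0,
\]
so $X_0 X_0^T$ is positive semi-definite, and it remains to prove that this quadratic form is strictly positive for every $v\neq 0$.

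First I would diagonalize $\mathcal{R}\mathcal{R}^T$ using its spectral/SVD structure from \eqref{eq:UPCs}: writing $U = [\mathcal{U}_d,\overline{\mathcal{U}}_d]$ for the orthonormal matrix of principal components (with $\overline{\mathcal{U}}_d$ completing $\mathcal{U}_d$ to an orthonormal basis of $\RR^p$), we have $\mathcal{R}\mathcal{R}^T = U\,\Sigma\Sigma^T\,U^T$ with $\Sigma\Sigma^T = diag(\sigma_1^2,\ldots,\sigma_p^2)$. Setting $y = U^T U_0 v$ and splitting it as $y = (y_1,y_2)$ with $y_1 = \mathcal{U}_d^T U_0 v\in\RR^d$ and $y_2 = \overline{\mathcal{U}}_d^T U_0 v\in\RR^{p-d}$, the quadratic form becomes
\[
v^T X_0 X_0^T v = y^T\Sigma\Sigma^T y = \sum_{j=1}^d \sigma_j^2 (y_1)_j^2 + \sum_{j=d+1}^p \sigma_j^2 (y_2)_{j-d}^2 \geq \sum_{j=1}^d \sigma_j^2 (y_1)_j^2.
\]

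The crux is to show that this lower bound is strictly positive when $v\neq 0$, and this is exactly where condition \eqref{eq:TheCondition} enters. Since $rank(\mathcal{U}_d^T U_0) = rank(U_0^T\mathcal{U}_d) = d$ and $\mathcal{U}_d^T U_0$ is a $d\times d$ matrix, it is invertible, so $y_1 = \mathcal{U}_d^T U_0 v\neq 0$ whenever $v\neq 0$. Moreover, singular values are non-negative and the spectral gap in the hypothesis gives $\sigma_d > \sigma_{d+1}\geq 0$, whence the leading eigenvalues satisfy $\sigma_1^2\geq\cdots\geq\sigma_d^2 > 0$. Therefore $\sum_{j=1}^d \sigma_j^2 (y_1)_j^2 \geq \sigma_d^2\norm{y_1}^2 > 0$, which yields $v^T X_0 X_0^T v > 0$ and hence the invertibility of $X_0 X_0^T$.

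The step I expect to require the most care is the bookkeeping of \emph{which} ingredient does the work, since the trailing eigenvalues $\sigma_{d+1}^2,\ldots,\sigma_p^2$ may vanish (the lemma does not assume $\mathcal{R}$ has full rank), so the second sum contributes only non-negatively and cannot be used to force strict positivity. All of the strictness must come from the top block, and it does so precisely because the rank condition guarantees $y_1\neq 0$ while the gap $\sigma_d > \sigma_{d+1}\geq 0$ guarantees $\sigma_d^2 > 0$; the orthogonal change of basis $y = U^T U_0 v$ is then just routine.
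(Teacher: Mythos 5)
Your proof is correct, but it takes a genuinely different route from the paper. You establish invertibility by showing the Gram matrix $X_0X_0^T = U_0^T\mathcal{R}\mathcal{R}^TU_0$ is positive definite, expanding the quadratic form in the eigenbasis of $\mathcal{R}\mathcal{R}^T$ and isolating the contribution of the top $d$ eigenvalues; the strict positivity then comes from the invertibility of the $d\times d$ matrix $\mathcal{U}_d^TU_0$ (the rank condition) together with $\sigma_d^2>0$ (the spectral gap $\sigma_d>\sigma_{d+1}\geq 0$). The paper instead argues purely with ranks: it reduces the claim to $rank(X_0)=d$ via the identity $rank(X_0)=rank(X_0X_0^T)$, picks a rank-$d$ preimage $V$ with $\mathcal{R}V=\mathcal{U}_d$, and sandwiches $d\geq rank(U_0^T\mathcal{R})\geq rank(U_0^T\mathcal{R}V)=rank(U_0^T\mathcal{U}_d)=d$. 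The two arguments rest on the same two ingredients, but distribute the work differently: your version makes the role of $\sigma_d>0$ completely explicit (and correctly notes that the trailing eigenvalues may vanish and contribute nothing), whereas the paper's version buries that fact inside the assertion that the columns of $\mathcal{U}_d$ lie in the image of $\mathcal{R}$ --- which is true precisely because $\sigma_d>0$. The paper's argument is shorter and avoids the change of basis; yours is more self-contained in that it exposes exactly where each hypothesis is used.
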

\begin{proof}
    In order to prove that $X_0 X_0^T\in\RR^{d\times d}$ is inevitable, it is enough to show that 
    \[
    rank(X_0) = rank(U_{0}^T \mathcal{R}) = d
    ,\]
    as $rank(X_0)=rank(X_0 X_0^T)$ for all matrices $X\in \RR^{d\times p}$.
    
    Since the columns of $\mathcal{U}_d$ lie in the image of $\mathcal{R}$, there is a matrix $V$ such that $rank (V) = d$ and $\mathcal{R}V = \mathcal{U}_d$. We note that
    $$
    d \geq rank(X_0) = rank(U_{0}^T \mathcal{R}) \geq rank(U_{0}^T \mathcal{R}V) = rank(U_{0}^T \mathcal{U}_d) = d,
    $$
    which concludes the proof.
\end{proof}

\section{Conclusion}
We have established a relation between the non-linear least-squares problem of Principal Component Analysis and a sequence of linear least-squares minimizations.
Apparently, the iterative least-squares algorithm presented above coincides with the well known Subspace Iterations algorithm.
As a consequence each iteration of Subspace (or Power) Iterations, can be interpreted as a solution to a least-squares problem.
In other words, solving a least-squares problem is equivalent to multiplying a basis with the sample covariance matrix.
Although our approach does not lead to a new way of computing the Principal Space, it can be used as a theoretical tool in analysis (e.g., see \cite{sober2017approximation,sober2016MMLS}).

\section*{Acknowledgments}
We would like to thank Prof. David Levin for the fruitful conversations he shared with us. B. Sober is supported through Math+X grant 400837 from the Simons Foundation. 
Y. Aizenbud is supported by the Israel Science Foundation (ISF, 1556/17),
Blavatnik Computer Science Research Fund, Indo-Israel Collaborative for Infrastructure Security - Israel Ministry of Science and Technology 3-14481, a Fellowship from Jyv\"{a}skyl\"{a} University and the Clore Foundation.

\bibliography{mybib}{}
\bibliographystyle{abbrv}

\end{document}